\newtheorem{theorem}{Theorem}
\newtheorem{lemma}[theorem]{Lemma}
\newtheorem{corollary}{Corollary}
\theoremstyle{remark}
\newcounter{saveenumerate}
\newcommand{\enumeratext}[1]{%
\setcounter{saveenumerate}{\value{enum\romannumeral\the\@enumdepth}}
\end{enumerate}
#1
\begin{enumerate}
\setcounter{enum\romannumeral\the\@enumdepth}{\value{saveenumerate}}%
}
\DeclarePairedDelimiterX{\norm}[1]{\lVert}{\rVert}{#1}
\newcommand{\N}{\ensuremath{\mathbb{N}}}
\newcommand{\R}{\ensuremath{\mathbb{R}}}
\newcommand{\tr}{\text{tr}}
\newcommand{\Gr}{\text{Gr}}
\newcommand{\bangle}[1]{\left\langle #1 \right\rangle}
\newcommand{\inprod}[2]{\bangle{#1, #2}}
\title{Equiangular subspaces in Euclidean spaces}
\author{
Igor Balla \thanks{Department of Mathematics, ETH, 8092 Zurich. igor.balla@math.ethz.ch.}
\and
Benny Sudakov \thanks{Department of Mathematics, ETH, 8092 Zurich. benjamin.sudakov@math.ethz.ch. Research supported in part by SNSF grant 200021-175573.}
}
\date{}
\begin{document}
\maketitle

\begin{abstract}
A set of lines through the origin is called \emph{equiangular} if every pair of lines defines the same angle, and the maximum size of an equiangular set of lines in $\R^n$ was studied extensively for the last 70 years. In this paper, we study analogous questions for $k$-dimensional subspaces. We discuss natural ways of defining the angle between $k$-dimensional subspaces and correspondingly study the maximum size of an equiangular set of $k$-dimensional subspaces in $\R^n$. Our bounds extend and improve a result of Blokhuis.
\end{abstract}

\section{Introduction} \label{introduction}

A set of lines passing through the origin is called \emph{equiangular} if every pair of lines makes the same angle. The question of determining the maximum size $N(n)$ of a set of equiangular lines in $\R^n$ has a long history going back 70 years. It is considered to be one of the founding problems of algebraic graph theory, see \cite{BDKS16, BY14, DGS75, GR01, LS73, vLS66} and references for more information. It is known that $N(n)$ grows quadratically with $n$. The upper bound 
\begin{equation} \label{Gerzon}
N(n) \leq \binom{n+1}{2}
\end{equation} 
was proved by Gerzon (see \cite{LS73}) and de Caen \cite{dC00} gave a (quite nontrivial) construction showing 
\begin{equation} \label{deCaen}
N(n) \geq \frac{2}{9}(n+1)^2
\end{equation} 
for all $n$ of the form $3\cdot 2^{2t-1}-1$ where $t \in \N$.

It is therefore natural and interesting to study analogous questions for $k$-dimensional subspaces. To this end, we must first understand the notion of angle between subspaces. We define the \textit{Grassmannian} $\Gr(k,n)$ to be the set of all $k$-dimensional subspaces of $\R^n$. Note that $\theta$ is the common angle between a pair of lines $U, V \in \Gr(1,n)$ if and only if
\[ \cos{\theta} = \max_{\substack{u \in U, v \in V \\ |u| = 1, |v| = 1}}{\inprod{u}{v}}.\] 
Generalizing this idea, given a pair of $k$-dimensional subspaces $U,V \in \Gr(k,n)$, we may recursively define the $k$ \emph{principal angles} $0 \leq \theta_1 \leq \ldots \leq \theta_k \leq \pi/2$ between $U$ and $V$ as follows: Choose unit vectors $u \in U, v \in V$ that maximize $\inprod{u}{v}$ and define $\theta_1 = \arccos{\inprod{u}{v}}$. Now recursively define $\theta_2, \ldots, \theta_k$ to be the principal angles between the $(k-1)$-dimensional  subspaces $U' = \{u' \in U :  u' \perp u = 0\}$ and $V' = \{v' \in V : v' \perp v = 0\}$. The geometric significance of principal angles are that they completely characterize the relative position of $U$ to $V$, in the sense that if $U', V' \in \Gr(k,n)$ have the same principal angles as $U, V$, then there exists an orthogonal matrix $Q$ such that $U' = \{Qu : u \in U\}$ and $V' = \{Qu : u \in U\}$, see \cite[Theorem 3]{W67}. 

It will be convenient for us to give another definition of principal angles that is more algebraic. Indeed, observe that a pair of lines $U, V \in \Gr(1,n)$ has common angle $\theta$ if and only if, when we choose any unit vectors $u  \in U, v \in V$, we have $(\cos{\theta})^2 = \inprod{u}{v}^2$. More generally, we associate to a subspace $U \in \Gr(k,n)$, a representative $n \times k$ matrix $\textbf{U} = (u_1, \ldots, u_k)$ where $u_1, \ldots, u_k$ is any orthonormal basis of column vectors spanning $U$. Now given a pair of subspaces $U, V \in \Gr(k,n)$  with principal angles $\theta_1, \ldots, \theta_k$, one can show that $\cos{\theta_1}, \ldots, \cos{\theta_k}$ are precisely the singular values of $\textbf{U}^{\intercal} \textbf{V}$. In other words, $(\cos{\theta_1})^2, \ldots, (\cos{\theta_k})^2$ are precisely the eigenvalues of $\textbf{V}^{\intercal} \textbf{U} \textbf{U}^{\intercal} \textbf{V}$.

Now that we understand angles between subspaces, we are ready to discuss the notion of equiangular subspaces. Note that one can consider equiangular sets of subspaces with respect to the principal angle $\theta_i$ for any fixed $1 \leq i \leq k$. More generally, for any function $d = d(\theta_1, \ldots, \theta_k)$ of the principal angles, we call a set of $k$-dimensional subspaces $H \subseteq \Gr(k,n)$ \emph{equiangular (with respect to $d$ and having common angle $\alpha$)} if $d(U,V) = \alpha$ for all $U \neq V \in H$. Thus we may define and study $N^d_{\alpha}(k, n)$, the maximum size of a set $H \subseteq \Gr(k,n)$ that is equiangular with respect to $d$ and having common angle $\alpha$, as well as $N^d(k, n) = \max_{\alpha}{N^d_{\alpha}(k,n)}$. We call a function $d : \Gr(k,n)^2 \rightarrow \R$ an \textit{angle distance} if $d(U,V) \in \{\theta_1(U,V), \ldots, \theta_k(U,V)\}$ for all $U,V \in \Gr(k,n)$. If $d$ satisfies $d(U,V) = 0$ iff $U = V$ then we call $d$ a \textit{proper} distance. 

In \cref{Angle distances} we give examples of angle distances and prove a general upper bound on $N^d_{\alpha}(k,n)$ for any angle distance $d$ and $\alpha > 0$, in particular improving and extending a result of Blokhuis \cite{B93} who studied the case $d = \theta_1$ and $k = 2$. Based on equiangular lines, we also give a lower bound construction of $k$-dimensional subspaces that are equiangular for any proper angle distance. We therefore conclude that for $k$ fixed and any proper angle distance $d$, $N^d(k,n) = \Theta(n^{2k})$ as $n \rightarrow \infty$. In \cref{Other distances}, we discuss $N^d(k,n)$ for some other well-studied distances $d$. In \cref{Concluding remarks}, we conclude by stating some open problems, in particular discussing another generalization of equiangular lines known as \emph{equi-isoclinic} subspaces.

\section{Angle distances} \label{Angle distances}

When trying to define the angle between two subspaces $U, V \in \Gr(k,n)$, one natural idea is to just take the minimum angle between any pair of vectors $u \in U, v \in V$. Since minimizing $\arccos{\inprod{u}{v}}$ is equivalent to maximizing $\inprod{u}{v}$, this idea gives exactly the first principal angle $\theta_1 = \theta_1(U,V)$. This angle distance was first considered by Dixmier \cite{D49}. In \cite{B93}, Blokhuis considered equiangular planes with respect to $\theta_1$ and proved that
\begin{equation} \label{Blokhuis}
N^{\theta_1}_{\alpha}(2, n) \leq \binom{2n + 3}{4}
\end{equation}
provided that the common angle $\alpha > 0$. This condition is necessary, since $\theta_1(U,V) = 0$ iff $U$ and $V$ share a nontrivial subspace, and so we could take infinitely many planes all sharing a fixed line, showing that $N^{\theta_1}_0(2,n) = \infty$. This is a troublesome property of $\theta_1$, because it shows that $\theta_1$ is not a proper distance and also that $\theta_1$ does not appeal to elementary geometric intuition. Indeed, consider a pair of planes $U, V$ in $\R^3$. They will always share a line and hence will have $\theta_1(U,V) = 0$. However, one would intuitively ascribe the angle between them to be $\theta_2(U,V)$. 

In view of this, it makes sense to define the minimum non-zero angle $\theta_F(U,V)  = \min \{\theta_i(U,V) : \theta_i(U,V) > 0 \}$. $\theta_F$ was first considered by Friedrichs \cite{F37} and it is a proper angle distance. Deutsch \cite{D95} gives applications of $\theta_1$ and $\theta_F$ to the rate of convergence of the method of cyclic projections, existence and uniqueness of abstract splines, and the product of operators with closed range.

Another proper angle distance is the maximum angle $\theta_k$, first considered by Krein, Krasnoselski, and Milman \cite{KKM48}. It was used by Asimov \cite{A85} for his ``Grand Tour,'' a method for visualizing high dimensional data by projecting to various two-dimensional subspaces and showing these projections sequentially to a human. $\theta_k$ was also considered by Conway, Hardin, and Sloane \cite{CHS96} in their paper on packing subspaces in Grassmannians.

For any angle distance $d$ and $\alpha > 0$, we give an upper bound on $N^d_{\alpha}(k, n)$ on the order of $n^{2k}$, extending Gerzon's bound in \cref{Gerzon}. In the case $d = \theta_1$ and $k = 2$, this improves Blokhuis' bound in \cref{Blokhuis}. The proof is based on the polynomial method, which was also the main tool in \cite{B93}.

\begin{theorem} \label{t_upper}
Let $k,n \in \N$ with $k \leq n$, let $d$ be an angle distance on $\Gr(k,n)$ and let $\alpha > 0$. Then
\[ N^d_{\alpha}(k, n) \leq \binom{\binom{n+1}{2} + k - 1}{k}. \]
\end{theorem}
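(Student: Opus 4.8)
The plan is to use the polynomial method, assigning to each subspace a projection matrix and to each such matrix a suitable homogeneous polynomial of degree $k$. Suppose $U_1,\dots,U_m \in \Gr(k,n)$ are equiangular with respect to $d$ with common angle $\alpha>0$, and set $c=(\cos\alpha)^2\in[0,1)$. For each $i$ fix an orthonormal representative matrix $\mathbf{U}_i$ as in the introduction, and let $P_i=\mathbf{U}_i\mathbf{U}_i^{\intercal}$ be the orthogonal projection onto $U_i$, a symmetric $n\times n$ matrix with $\tr(P_i)=k$. The starting point is the algebraic description of principal angles recalled above: the eigenvalues of the $k\times k$ matrix $\mathbf{U}_i^{\intercal}P_j\mathbf{U}_i=\mathbf{U}_i^{\intercal}\mathbf{U}_j\mathbf{U}_j^{\intercal}\mathbf{U}_i$ are exactly $(\cos\theta_1)^2,\dots,(\cos\theta_k)^2$, where $\theta_1,\dots,\theta_k$ are the principal angles between $U_i$ and $U_j$. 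Since $d$ is an angle distance, $d(U_i,U_j)$ is one of these principal angles, so for $i\neq j$ the value $c=(\cos d(U_i,U_j))^2$ is an eigenvalue of $\mathbf{U}_i^{\intercal}P_j\mathbf{U}_i$; on the other hand $\mathbf{U}_i^{\intercal}P_i\mathbf{U}_i=I_k$.

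The key step is to define, for each $i$, the polynomial
\[ g_i(X)=\det\!\left(\mathbf{U}_i^{\intercal}X\mathbf{U}_i-\frac{c\,\tr(X)}{k}\,I_k\right), \]
viewed as a function of a symmetric $n\times n$ matrix $X$. The entries of the $k\times k$ matrix inside the determinant are linear forms in the entries of $X$, so $g_i$ is a homogeneous polynomial of degree $k$ in the $\binom{n+1}{2}$ independent entries of $X$. Evaluating at $X=P_j$ and using $\tr(P_j)=k$ gives $g_i(P_j)=\det(\mathbf{U}_i^{\intercal}P_j\mathbf{U}_i-cI_k)$, which by the previous paragraph vanishes whenever $i\neq j$ (as $c$ is then an eigenvalue of $\mathbf{U}_i^{\intercal}P_j\mathbf{U}_i$), while $g_i(P_i)=\det((1-c)I_k)=(1-c)^k\neq 0$ since $c<1$.

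From here the conclusion is immediate. If $\sum_i\lambda_i g_i\equiv 0$ as a polynomial, then evaluating at $X=P_j$ yields $\lambda_j(1-c)^k=0$, hence $\lambda_j=0$ for every $j$; thus $g_1,\dots,g_m$ are linearly independent elements of the space of homogeneous polynomials of degree $k$ in $\binom{n+1}{2}$ variables, which has dimension $\binom{\binom{n+1}{2}+k-1}{k}$, giving $m\leq\binom{\binom{n+1}{2}+k-1}{k}$. The one point that genuinely requires care is the homogenization: the naive choice $\det(\mathbf{U}_i^{\intercal}X\mathbf{U}_i-cI_k)$ is only of degree $\leq k$, which would cost an extra $+1$ inside the binomial and merely reprove a weaker bound; replacing $c$ by $c\,\tr(X)/k$ — exploiting that $\tr(X)$ is the linear form equal to $k$ on every rank-$k$ projection — is what yields the sharp exponent. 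I therefore expect the bookkeeping of homogeneity, and checking that restriction to the subspace of symmetric matrices genuinely gives a polynomial in $\binom{n+1}{2}$ variables, to be the parts most in need of attention, with everything else following directly from the eigenvalue characterization of principal angles.
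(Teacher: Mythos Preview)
Your argument is correct and essentially identical to the paper's proof: the same polynomials $g_i(X)=\det\bigl(\mathbf{U}_i^{\intercal}X\mathbf{U}_i-\tfrac{c\,\tr(X)}{k}I_k\bigr)$ are used, evaluated at the projections $P_j=\mathbf{U}_j\mathbf{U}_j^{\intercal}$, and their linear independence together with the dimension of homogeneous degree-$k$ polynomials in $\binom{n+1}{2}$ variables gives the bound. Your commentary on the homogenization trick (replacing $c$ by $c\,\tr(X)/k$) is exactly the point that distinguishes this bound from the weaker one of Blokhuis.
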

\begin{proof}
Let $\{ U_1, \ldots, U_m \} \subseteq \Gr(k,n)$ be a set of subspaces such that $d(U_i, U_j) = \alpha$ for all $i \neq j$ and for each $U_i$, let $\textbf{U}_i = (u_1, \ldots, u_k)$ be a representative $n \times k$ matrix where $u_1, \ldots, u_k$ is any orthonormal basis of column vectors spanning $U_i$. Observe that for any $i \neq j$, since $\alpha = d(U_i, U_j)$ is a principal angle between $U_i$ and $U_j$, we have, as per the discussion in \cref{introduction}, that $(\cos{\alpha})^2$ is an eigenvalue of $\textbf{U}_i^{\intercal} \textbf{U}_j \textbf{U}_j^{\intercal} \textbf{U}_i$. Thus if we define $\lambda = (\cos{\alpha})^2$ then we have $\det \left( \textbf{U}_i^{\intercal}  \textbf{U}_j \textbf{U}_j^{\intercal} \textbf{U}_i - \lambda I_k \right) = 0$, where $I_k$ is the $k \times k$ identity matrix.

Now let $\mathscr{S} = \{ X \in \R^{n \times n} : X^{\intercal} = X \}$ be the set of all symmetric $n \times n$ matrices and define functions $f_1, \ldots, f_m : \mathscr{S} \rightarrow \R$ by
\[ f_i(X) = \det \left( \textbf{U}_i^{\intercal} X \textbf{U}_i - \frac{\lambda \tr(X)}{k} I_k \right). \]
Since $\tr(\textbf{U}_j \textbf{U}_j^{\intercal}) = \tr(\textbf{U}_j^{\intercal} \textbf{U}_j) = \tr(I) = k$, we conclude that
\[ f_i(\textbf{U}_j \textbf{U}_j^{\intercal}) = 
   \begin{cases} 
      (1 - \lambda)^k & \text{ if } i = j \\
      0 & \text{ if } i \neq j.
   \end{cases}
\]
Moreover, note that $\lambda \neq 1$ since $\alpha \neq 0$. It therefore follows that $f_1, \ldots, f_m$ are linearly independent. Indeed, if $\sum_{i=1}^{m}{c_i f_i} = 0$ for some $c_1, \ldots, c_m \in \R$, then for all $j$ we have $0 = \sum_{i=1}^{m}{c_i f_i(\textbf{U}_j \textbf{U}_j^{\intercal})} = c_j (1-\lambda)^k$, which implies $c_j = 0$.

Thus it suffices to show that $f_1, \ldots, f_m$ live in a space of dimension $\binom{\binom{n+1}{2} + k - 1}{k}$. To that end, recall that a multivariable polynomial $f : \R^t \rightarrow \R$ is called homogeneous of degree $k$ if it is a linear combination of monomials of degree $k$, and that the linear space of such polynomials has dimension $\binom{t + k - 1}{k}$. For any $X \in \mathscr{S}$, we let $X_{a,b}$ denote the entry in position $a,b$ of the matrix $X$, so that $\mathscr{S}$ may be parametrized by the $\binom{n+1}{2}$ variables $\{ X_{a,b} : 1 \leq a \leq b \leq n \}$ living on or above the diagonal and hence we may think of the functions $f_i$ as polynomials in these variables. Now observe that for any $i$ and $X \in \mathscr{S}$, every entry of the $k \times k$ matrix $\textbf{U}_i^{\intercal} X \textbf{U}_i - \frac{\lambda \tr(X)}{k} I_k$ is a homogeneous polynomial of degree 1 in the variables $\{ X_{a,b} : 1 \leq a \leq b \leq n \}$. It follows from the definition of the determinant that $f_i(X)$ is a homogeneous polynomial of degree $k$ in these variables. Since there are $\binom{n+1}{2}$ such variables, the space of all homogeneous polynomials of degree $k$ in these variables has dimension $\binom{\binom{n+1}{2} + k - 1}{k}$, completing the proof.
\end{proof}

To obtain lower bounds for this problem, it is natural to start with a construction of many equiangular lines and then try to combine them to make $k$-dimensional subspaces. Recall that $N(n)$ is the maximum size of a set of equiangular lines in $\R^n$. In the following, we make use of the Frobenius inner product $\inprod{A}{B} = \tr(A^{\intercal} B)$ for $n \times n$ real-valued matrices $A, B$.

\begin{theorem} \label{t_lower}
For any $k, n \in \N$ with $k \leq n$, there exists a set $H \subseteq \Gr(k, kn)$ with $|H| = N(n)^k$ and $\alpha \in (0, \pi/2)$ such that for all $U,V \in H$, the principal angles between $U$ and $V$ all lie in the set $\{ 0, \alpha \}$.
\end{theorem}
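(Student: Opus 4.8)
The plan is to place a maximum equiangular configuration of lines into each of $k$ mutually orthogonal copies of $\R^n$ sitting inside $\R^{kn}$, and to let $H$ consist of all the $k$-dimensional subspaces obtained by choosing one such line from each copy.

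In detail, fix unit vectors $v_1, \ldots, v_N \in \R^n$ spanning a set of $N = N(n)$ equiangular lines, so that $\inprod{v_p}{v_q} = \pm\beta$ for all $p \neq q$, where $\beta \in [0,1)$ is the common value. Decompose $\R^{kn} = W_1 \oplus \cdots \oplus W_k$ as an orthogonal direct sum of $k$ copies of $\R^n$, and for $\ell \in \{1,\ldots,k\}$ and $p \in \{1,\ldots,N\}$ let $v_{\ell,p} \in \R^{kn}$ denote the vector equal to $v_p$ inside the block $W_\ell$ and zero elsewhere. For each $k$-tuple $\vec{i} = (i_1,\ldots,i_k)$ with entries in $\{1,\ldots,N\}$, set $U_{\vec{i}} = \Span(v_{1,i_1},\ldots,v_{k,i_k})$; orthogonality of the blocks makes $v_{1,i_1},\ldots,v_{k,i_k}$ an orthonormal system, so $U_{\vec{i}} \in \Gr(k,kn)$. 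Let $H$ be the collection of all such $U_{\vec{i}}$.

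First I would check $|H| = N^k$: the image of $U_{\vec{i}}$ under the orthogonal projection $\R^{kn} \to W_\ell$ is exactly the line $\Span(v_{i_\ell})$, and distinct $v_p$ span distinct lines, so $U_{\vec{i}} = U_{\vec{j}}$ forces $\vec{i} = \vec{j}$. Next, given $\vec{i} \neq \vec{j}$, take $\textbf{U}_{\vec{i}}$ and $\textbf{U}_{\vec{j}}$ to be the $kn \times k$ matrices whose columns are the orthonormal systems above. Since $v_{\ell,p}$ is supported in $W_\ell$, the product $\textbf{U}_{\vec{i}}^{\intercal}\textbf{U}_{\vec{j}}$ is the diagonal $k \times k$ matrix with $\ell$-th entry $\inprod{v_{i_\ell}}{v_{j_\ell}}$, which equals $1$ if $i_\ell = j_\ell$ and $\pm\beta$ otherwise; hence its singular values lie in $\{1,\beta\}$. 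By the description of principal angles as the arccosines of the singular values of $\textbf{U}_{\vec{i}}^{\intercal}\textbf{U}_{\vec{j}}$ recalled in \cref{introduction}, the principal angles between $U_{\vec{i}}$ and $U_{\vec{j}}$ all lie in $\{0, \arccos\beta\}$ (and $\arccos\beta$ genuinely occurs, since $\vec{i} \neq \vec{j}$ forces $i_\ell \neq j_\ell$ for some $\ell$). Taking $\alpha = \arccos\beta$ finishes the argument, provided $\alpha \in (0,\pi/2)$.

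The one point requiring care is therefore that $\beta \in (0,1)$. The inequality $\beta < 1$ is automatic, since distinct lines give $|\inprod{v_p}{v_q}| < 1$. For $\beta > 0$, observe that when $n \geq 2$ the $n+1$ vertices of a regular simplex centred at the origin span $n+1$ distinct equiangular lines (with common value $1/n$), so $N(n) \geq n+1 > n$; as more than $n$ lines through the origin in $\R^n$ cannot be pairwise orthogonal, any maximum equiangular line set has $\beta > 0$. For $n = 1$ the statement is vacuous, as $|H| = 1$ and all principal angles of a subspace with itself vanish (so any $\alpha \in (0,\pi/2)$ works). Apart from this edge-case bookkeeping, the proof is a direct computation exploiting the block-diagonal form of $\textbf{U}_{\vec{i}}^{\intercal}\textbf{U}_{\vec{j}}$, and I do not expect a genuine obstacle.
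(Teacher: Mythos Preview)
Your proof is correct and takes essentially the same approach as the paper: the paper's vectors $e_i u^{\intercal}$, viewed in $\R^{kn}$, are exactly your block vectors $v_{\ell,p}$, and the computation that $\textbf{U}^{\intercal}\textbf{V}$ is diagonal is identical. You are in fact more careful than the paper in justifying $|H| = N^k$ and in verifying that the common angle lies strictly in $(0,\pi/2)$, points the paper simply asserts.
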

\begin{proof}
Let $L \subseteq \Gr(1,n)$ be an equiangular set of lines with $|L| = N(n)$, and let $\alpha \in (0, \pi/2)$ be the common angle of any pair of lines in $L$. Now let $C$ be the set of vectors obtained by choosing a unit vector along each line in $L$, and observe that $\inprod{u}{v}^2 = (\cos{\alpha})^2$ for all $u \neq v \in C$.

Now let $e_1, \ldots e_k$ be the standard basis in $\R^k$, and observe that for all $u, v \in C$, we have
\[ \inprod{e_i u^{\intercal}}{e_j v^{\intercal}} = \tr(u e_i^{\intercal} e_j v^{\intercal}) = (e_i^{\intercal} e_j) (u^{\intercal} v) =
   \begin{cases} 
      \inprod{u}{v} & \text{ if } i = j \\
      0 & \text{ if } i \neq j.
   \end{cases}
\]
Now observe that for any $i$ and $u \in C$, $e_i u^{\intercal}$ can be viewed as a vector in $\R^{kn}$ and thus if we let $u_1, \ldots, u_k \in C$, then $e_1 u_1^{\intercal}, \ldots, e_k u_k^{\intercal}$ can be viewed as orthonormal vectors in $\R^{kn}$ and hence define a subspace $W_{u_1, \ldots, u_k}$ in $\Gr(k, kn)$. Furthermore, for all $u_1, \ldots, u_k, v_1, \ldots, v_k \in C$, if we let $\textbf{U}$ be the $kn \times k$ matrix with column vectors $e_1 u_1^{\intercal}, \ldots, e_k u_k^{\intercal}$ and let $\textbf{V}$ be the $kn \times k$ matrix with column vectors $e_1 v_1^{\intercal}, \ldots, e_k v_k^{\intercal}$, then $\textbf{U}$ is a representative matrix for $W_{u_1, \ldots, u_k}$ and $\textbf{V}$ is a representative matrix for $W_{v_1, \ldots, v_k}$. Now we compute that
\[ 
 (\textbf{U}^{\intercal} \textbf{V})_{i,j} = \inprod{e_i u_i^{\intercal}}{e_j v_j^{\intercal}} = 
   \begin{cases} 
      \inprod{u_i}{v_j} & \text{ if } i = j \\
      0 & \text{ if } i \neq j,
   \end{cases}
\]
and hence
\[ 
 (\textbf{V}^{\intercal} \textbf{U} \textbf{U}^{\intercal} \textbf{V})_{i,j} = 
   \begin{cases} 
      \inprod{u_i}{v_j}^2 & \text{ if } i = j \\
      0 & \text{ if } i \neq j.
   \end{cases}
\]
Thus the eigenvalues of $\textbf{V}^{\intercal} \textbf{U} \textbf{U}^{\intercal} \textbf{V}$ lie in the set $\{1, \cos(\alpha)^2\}$ and so the principal angles between $W_{u_1, \ldots, u_k}$ and $W_{v_1, \ldots, v_k}$ lie in the set $\{0, \alpha\}$. Letting $H = \{ W_{u_1, \ldots, u_k} : u_1, \ldots, u_k \in C\}$ and observing that $|H| = N(n)^k$ completes the proof.
\end{proof}

Next we show that the construction above is equiangular for any proper angle distance $d$, and hence obtain the following corollary.

\begin{corollary} \label{c1}
Let $d$ be a proper angle distance and let $k \in \N$ be fixed. Then 
\[ N^{d}(k, n) = \Theta(n^{2k}) \text{ as } n \rightarrow \infty. \]
\end{corollary}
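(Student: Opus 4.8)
The plan is to obtain the two matching bounds essentially for free from the theorems already proved. First I would get the upper bound $N^d(k,n)=O(n^{2k})$ from \cref{t_upper}. Since $d$ is a proper angle distance, any equiangular set $H$ with $|H|\ge 2$ has common angle $\alpha=d(U,V)$ for any two distinct $U,V\in H$, and this $\alpha$ is strictly positive; hence \cref{t_upper} applies and gives $|H|\le\binom{\binom{n+1}{2}+k-1}{k}$. (If $\alpha=0$ then $|H|\le 1$ by properness, so this bound holds in all cases.) Taking the maximum over $\alpha$ yields $N^d(k,n)\le\binom{\binom{n+1}{2}+k-1}{k}$, which for fixed $k$ is a polynomial of degree $k$ in $\binom{n+1}{2}=\tfrac12 n^2+O(n)$, hence $O(n^{2k})$ (in fact $\sim n^{2k}/(2^k k!)$).

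For the lower bound I would use \cref{t_lower}: for each $n'\in\N$ it produces a set $H\subseteq\Gr(k,kn')$ with $|H|=N(n')^k$ and a single value $\alpha\in(0,\pi/2)$ such that every pair of distinct $U,V\in H$ has all of its principal angles in $\{0,\alpha\}$. The crux is that such an $H$ is automatically equiangular with respect to $d$ with common angle $\alpha$: for distinct $U,V\in H$ the number $d(U,V)$ is, by definition of an angle distance, one of $\theta_1(U,V),\ldots,\theta_k(U,V)$, so it lies in $\{0,\alpha\}$; and since $d$ is proper and $U\neq V$ it cannot equal $0$, so $d(U,V)=\alpha$. Therefore $N^d_\alpha(k,kn')\ge N(n')^k$.

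To reach an arbitrary dimension $m\ge k$ I would set $n'=\lfloor m/k\rfloor$ and embed $\R^{kn'}$ isometrically into $\R^m$; this sends orthonormal bases to orthonormal bases and hence preserves all principal angles, so the image of $H$ in $\Gr(k,m)$ still has every pairwise principal angle in $\{0,\alpha\}$ and, by the same argument applied to the proper angle distance $d$ on $\Gr(k,m)$, is equiangular with common angle $\alpha$. Thus $N^d(k,m)\ge N(\lfloor m/k\rfloor)^k$. Finally I would invoke the quadratic growth of $N$: de Caen's construction \eqref{deCaen} gives $N(n)\ge\tfrac29(n+1)^2$ along $n=3\cdot2^{2t-1}-1$, and since consecutive terms of this sequence differ by a multiplicative factor tending to $4$ while $N$ is nondecreasing, $N(n)=\Omega(n^2)$ for all $n$ (whereas $N(n)=O(n^2)$ by \eqref{Gerzon}). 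Plugging in, $N^d(k,m)\ge N(\lfloor m/k\rfloor)^k=\Omega\bigl((m/k)^{2k}\bigr)=\Omega(m^{2k})$ since $k$ is fixed, which together with the upper bound gives $N^d(k,n)=\Theta(n^{2k})$.

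The only genuinely delicate step is the observation that the \emph{single} construction of \cref{t_lower} is simultaneously equiangular for \emph{every} proper angle distance $d$. This works precisely because there the principal‑angle spectrum of each pair is the smallest possible nontrivial set $\{0,\alpha\}$, so whichever of the $k$ principal angles $d$ happens to select is forced by properness to be $\alpha$ on distinct subspaces. The remaining points — the passage from dimension $kn'$ to general $m$ via an isometric embedding, and the interpolation of de Caen's bound from the sparse sequence to all $n$ — are routine but worth spelling out.
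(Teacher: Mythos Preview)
Your proof is correct and follows essentially the same approach as the paper's. The paper phrases the key step as the sandwich $\theta_F \le d \le \theta_k$ combined with $\theta_F = \theta_k = \alpha$ on $H$, whereas you argue directly that $d(U,V)\in\{0,\alpha\}$ and invoke properness to rule out $0$; these are the same idea, and you are in fact more explicit than the paper about the passage from dimension $kn'$ to general $m$ and about interpolating de~Caen's bound to all $n$.
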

\begin{proof}
\Cref{t_upper} immediately gives the upper bound $N^d(k, n) \leq O(n^{2k})$. For the lower bound, let $\alpha \in (0, \pi/2)$ and $H \subseteq \Gr(k, kn)$ be given by \Cref{t_lower}. Observe that for all $U \neq V \in H$, the principal angles between $U$ and $V$ cannot all be $0$, and thus $\theta_F(U,V) = \theta_k(U,V) = \alpha$. Moreover, observe that since $d$ is a proper angle distance, we have $\theta_F \leq d \leq \theta_k$. Thus $d(U,V) = \alpha$ for all $U \neq V \in H$. De Caen's bound \cref{deCaen} implies that $N(n) \geq \Omega(n^2)$ and so we obtain
\[ N^d(k,kn) \geq |H| = N(n)^k \geq \Omega(n^{2k}).\]
Thus we conclude $N^d(k,n) \geq \Omega(n^{2k})$.
\end{proof}

\section{Other distances} \label{Other distances}

Besides angle distances, there are several other natural distance functions that are considered in geometry, statistics, and applied problems, see e.g. \cite{EAS98}. Let $U,V \in \Gr(k,n)$ be $k$-dimensional subspaces of $\R^n$ with principal angles $\theta_1, \ldots, \theta_k$. If one considers the Grassmanian $\Gr(k,n)$ as a manifold, one may compute (see \cite[Theorem 8]{W67}) that the geodesic distance is 
\[ d_G(U,V) = \sqrt{\theta_1^2 + \ldots + \theta_k^2}.\] 
In the context of packing subspaces, Conway, Hardin, and Sloane \cite{CHS96} consider the geodesic distance, the maximum principal angle, as well as the chordal distance defined by 
\[ d_C(U,V) = \sqrt{(\sin{\theta_1})^2 + \ldots + (\sin{\theta_k})^2} = \sqrt{k - \tr(\textbf{V}^{\intercal} \textbf{U} \textbf{U}^{\intercal} \textbf{V})}.\] 
Also in the context of packing subspaces, Dhillon, Heath, Strohmer, and Tropp \cite{DHST08} consider the first principal angle (spectral distance), as well as the Fubini-Study distance defined by
\[ d_{FS}(U,V) = \arccos\left(\prod_{i=1}^{k}{\cos{\theta_i}}\right) = \arccos{\left | \det{\textbf{U}^{\intercal} \textbf{V}} \right |}.\]

For a subspace $U \in \Gr(k,n)$, we define the orthogonal complement $U^{\perp} = \{v \in \R^n : v \perp u \text{ for all } u \in U\}$. The following lemma shows us that the nonzero principal angles between subspaces are the same as the nonzero principal angles between their orthogonal complements.

\begin{lemma} \label{l_complement}
For any $U,V \in \Gr(k,n)$, the nonzero principal angles between $U^{\perp}$ and $V^{\perp}$ are the same as the nonzero principal angles between $U$ and $V$. 
\end{lemma}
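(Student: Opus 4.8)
The plan is to work with the algebraic description of principal angles recalled in \cref{introduction}. Fix orthonormal representative matrices $\textbf{U},\textbf{V}\in\R^{n\times k}$ for $U$ and $V$, and also choose $\textbf{U}_\perp,\textbf{V}_\perp\in\R^{n\times(n-k)}$ whose columns are orthonormal bases of $U^\perp$ and $V^\perp$. Then $(\textbf{U}\mid\textbf{U}_\perp)$ and $(\textbf{V}\mid\textbf{V}_\perp)$ are orthogonal $n\times n$ matrices, so in particular $\textbf{U}\textbf{U}^\intercal+\textbf{U}_\perp\textbf{U}_\perp^\intercal=I_n$ and likewise with $\textbf{V}$. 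The numbers $(\cos\theta_i)^2$, where $\theta_1,\dots,\theta_k$ are the principal angles between $U$ and $V$, are exactly the eigenvalues (with multiplicity) of the $k\times k$ matrix $M:=\textbf{U}^\intercal\textbf{V}\textbf{V}^\intercal\textbf{U}$ (this has the same eigenvalues as the matrix $\textbf{V}^\intercal\textbf{U}\textbf{U}^\intercal\textbf{V}$ from the introduction, both being of the form $XX^\intercal$ and $X^\intercal X$ for the square matrix $X=\textbf{U}^\intercal\textbf{V}$), and a principal angle is nonzero precisely when the corresponding eigenvalue is different from $1$. The same holds for $U^\perp,V^\perp$ with the $(n-k)\times(n-k)$ matrix $M':=\textbf{V}_\perp^\intercal\textbf{U}_\perp\textbf{U}_\perp^\intercal\textbf{V}_\perp$. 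So it suffices to show that $M$ and $M'$ have the same multiset of eigenvalues different from $1$.

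The key computation I would carry out is, using $\textbf{V}\textbf{V}^\intercal=I_n-\textbf{V}_\perp\textbf{V}_\perp^\intercal$ and $\textbf{U}^\intercal\textbf{U}=I_k$,
\[ I_k-M=\textbf{U}^\intercal(I_n-\textbf{V}\textbf{V}^\intercal)\textbf{U}=\textbf{U}^\intercal\textbf{V}_\perp\textbf{V}_\perp^\intercal\textbf{U}=AA^\intercal,\qquad A:=\textbf{U}^\intercal\textbf{V}_\perp\in\R^{k\times(n-k)}. \]
Thus an eigenvalue $\mu\neq1$ of $M$ corresponds, with the same multiplicity, to a nonzero eigenvalue $1-\mu$ of $AA^\intercal$. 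Next I would invoke the standard fact that $AA^\intercal$ and $A^\intercal A$ have the same nonzero eigenvalues with the same multiplicities, together with the dual computation $A^\intercal A=\textbf{V}_\perp^\intercal\textbf{U}\textbf{U}^\intercal\textbf{V}_\perp=\textbf{V}_\perp^\intercal(I_n-\textbf{U}_\perp\textbf{U}_\perp^\intercal)\textbf{V}_\perp=I_{n-k}-M'$. Hence the nonzero eigenvalues of $AA^\intercal$ are exactly the numbers $1-\nu$ for $\nu\neq1$ an eigenvalue of $M'$, again matching multiplicities. Composing the two involutions $x\mapsto1-x$ shows that the eigenvalues $\neq1$ of $M$ and of $M'$ coincide as multisets; since $\theta\mapsto\cos\theta$ is injective on $[0,\pi/2]$ and $t\mapsto t^2$ is injective on $[0,1]$, this yields that the nonzero principal angles between $U$ and $V$ agree with those between $U^\perp$ and $V^\perp$.

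I do not expect a genuine obstacle here; the only point requiring care is the bookkeeping of multiplicities through the two affine shifts and through the passage between $AA^\intercal$ and $A^\intercal A$ (and noting that $M$ and $M'$ have different sizes, $k$ versus $n-k$, so the ``extra'' eigenvalues on each side are automatically equal to $1$, i.e.\ correspond to zero principal angles). As an alternative one could deduce the statement in essentially one step from the cosine–sine decomposition of the orthogonal matrix $(\textbf{U}\mid\textbf{U}_\perp)^\intercal(\textbf{V}\mid\textbf{V}_\perp)$, whose diagonal blocks are $\textbf{U}^\intercal\textbf{V}$ and $\textbf{U}_\perp^\intercal\textbf{V}_\perp$, but the self-contained matrix manipulation above is preferable.
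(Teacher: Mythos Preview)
Your proof is correct and is essentially the same as the paper's: both compute $I_k-\textbf{U}^\intercal\textbf{V}\textbf{V}^\intercal\textbf{U}=\textbf{U}^\intercal\textbf{V}_\perp\textbf{V}_\perp^\intercal\textbf{U}$ and $I_{n-k}-\textbf{V}_\perp^\intercal\textbf{U}_\perp\textbf{U}_\perp^\intercal\textbf{V}_\perp=\textbf{V}_\perp^\intercal\textbf{U}\textbf{U}^\intercal\textbf{V}_\perp$, then invoke the fact that $AA^\intercal$ and $A^\intercal A$ share their nonzero eigenvalues (with $A=\textbf{U}^\intercal\textbf{V}_\perp$) to conclude. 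Your write-up is slightly more careful about multiplicities and the injectivity of $\theta\mapsto(\cos\theta)^2$ on $[0,\pi/2]$, but the argument is the same.
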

\begin{proof}
Observe that $\textbf{U} \textbf{U}^{\intercal}$ is an orthogonal projection onto $U$ and $\textbf{U}^{\perp} (\textbf{U}^{\perp})^{\intercal}$ is an orthogonal projection onto $U^{\perp}$, so that $\textbf{U} \textbf{U}^{\intercal} + \textbf{U}^{\perp} (\textbf{U}^{\perp})^{\intercal} = I_n$ where $I_n$ is the $n \times n$ identity matrix. Thus 
\[ 
\textbf{U}^{\intercal} \textbf{V} \textbf{V}^{\intercal} \textbf{U}
= \textbf{U}^{\intercal} (I_n - \textbf{V}^{\perp} (\textbf{V}^{\perp})^{\intercal} ) \textbf{U}
= I_{k} - \textbf{U}^{\intercal} \textbf{V}^{\perp} (\textbf{V}^{\perp})^{\intercal}  \textbf{U}
\]
and
\[ 
(\textbf{V}^{\perp})^{\intercal} \textbf{U}^{\perp} (\textbf{U}^{\perp})^{\intercal} \textbf{V}^{\perp}
= (\textbf{V}^{\perp})^{\intercal} (I_n - \textbf{U} \textbf{U}^{\intercal} ) \textbf{V}^{\perp}
= I_{n-k} - (\textbf{V}^{\perp})^{\intercal} \textbf{U} \textbf{U}^{\intercal} \textbf{V}^{\perp}.
\]
Since it is well known that for any $A, B$ the matrices $AB$ and $BA$ have the same nonzero eigenvalues with the same multiplicity, we have that $\textbf{U}^{\intercal} \textbf{V}^{\perp} (\textbf{V}^{\perp})^{\intercal}  \textbf{U}$ has the same nonzero eigenvalues as $(\textbf{V}^{\perp})^{\intercal} \textbf{U} \textbf{U}^{\intercal} \textbf{V}^{\perp}$, and therefore $\textbf{U}^{\intercal} \textbf{V} \textbf{V}^{\intercal} \textbf{U}$ has the same eigenvalues as $(\textbf{V}^{\perp})^{\intercal} \textbf{U}^{\perp} (\textbf{U}^{\perp})^{\intercal} \textbf{V}^{\perp}$ except for eigenvalues of 1. Hence the principal angles between $U$ and $V$ are the same as the principal angles between $U^{\perp}$ and $V^{\perp}$, except for angles of $0$.
\end{proof}

Now let $d$ be one of the proper distances discussed in this paper, and observe that principal angles of 0 don't affect $d$. Thus using \Cref{l_complement}, we have that $d(U^{\perp}, V^{\perp}) = d(U,V)$ for all $U,V \in \Gr(k,n)$. We therefore conclude that $U_1, \ldots, U_m \in \Gr(k,n)$ are equiangular with respect to $d$ iff $U_1^{\perp}, \ldots, U_m^{\perp} \in \Gr(n-k,n)$ are equiangular with respect to $d$, and hence that
\[ N^d(k,n) = N^d(n-k,n).\]
Thus, for the purposes of studying $N^d(k,n)$, it will suffice for us to consider the case $k \leq n/2$.

Conway, Hardin, and Sloane \cite{CHS96} give some reasons why they consider the chordal distance $d_C$ to be the best definition for packings, in particular observing that the Grassmanian $\Gr(k,n)$ with the chordal distance can be isometrically embedded onto a sphere in $\R^D$ for $D = \binom{n+1}{2} - 1$, by mapping a subspace $U$ to the projection matrix $\textbf{U}\textbf{U}^{\intercal}$ and using the Frobenius inner product $\tr(A^{\intercal} B)$. Since an equidistant set (simplex) in $\R^D$ has size at most $D+1$, they conclude that 
\[ N^{d_C}(k,n) \leq \binom{n+1}{2},\] 
generalizing Gerzon's bound \cref{Gerzon}. For a lower bound, given a set of $m$ $k$-dimensional subspaces $U_1, \ldots, U_m \in \Gr(k,n)$ equiangular with respect to $d_C$, observe that by adding a new dimension and defining $U_i' = \text{span}(U_i, e_{n+1})$, we obtain a set of $m$ $(k+1)$-dimensional subspaces in $\Gr(k+1, n+1)$ which is equiangular with respect to $d_C$. Thus $N^{d_C}(k+1,n+1) \geq N^{d_C}(k,n)$ for all $k \leq n$ and so, using the assumption $k \leq n/2$ together with \cref{deCaen}, we obtain
\[ N^{d_C}(k,n) \geq N^{d_C}(1,n-k+1) = N(n-k+1) = \Omega(n^2).\]
Additionally, for a prime $p$ such that a Hadamard matrix of order $(p+1)/2$ exists, Calderbank, Hardin, Rains, Shor, and Sloane \cite{CHRSS99} give a construction of $\binom{p+1}{2}$ subspaces of dimension $(p-1)/2$ in $\R^p$ which are equiangular with respect to $d_C$, so that 
\[ N^{d_C}((p-1)/2, p) = \binom{p+1}{2}.\]

For the Fubini-Study distance $d_{FS}$, we will need some definitions from multilinear algebra, see e.g. \cite{Y92} for reference. Let $u \wedge v$ denote the wedge product between $u,v \in \R^n$. Let $\bigwedge^k(\R^n) = \{u_1 \wedge \ldots \wedge u_k : u_1, \ldots, u_k \in \R^n\} $ denote the $k$th exterior power of $\R^n$ and note that $\dim{\bigwedge^k(\R^n)} = \binom{n}{k}$. We shall use the Pl\"{u}cker embedding of $\Gr(k,n)$ into the projective space of lines over $\bigwedge^k(\R^n)$, defined as follows. Given a subspace $U \in \Gr(k,n)$ with $u_1, \ldots, u_k$ being an orthonormal basis of column vectors of $\textbf{U}$, we define $\phi(U) = u_1 \wedge \ldots \wedge u_k \in \bigwedge^k(\R^n)$. One can compute that $\langle \phi(U), \phi(V) \rangle = \det(\textbf{U}^{\intercal} \textbf{V})$ defines an inner product between $\phi(U)$ and $\phi(V)$. Therefore, given a set of subspaces $U_1, \ldots, U_m \in \Gr(k,n)$ equiangular with respect to $d_{FS}$, we have that $\phi(U_1), \ldots, \phi(U_m)$ are a set of vectors such that if we take a line along each vector, we obtain a set of equiangular lines in $\bigwedge^k(\R^n)$. Thus using \cref{Gerzon}, we conclude
\[ N^{d_{FS}}(k,n) \leq \binom{\binom{n}{k}+1}{2}. \]
Actually, the Pl\"{u}cker embedding gives an embedding into an algebraic variety over $\bigwedge^k(\R^n)$  defined by the so-called Pl\"{u}cker relations, and so it conceivable that this can be used to obtain a better upper bound. If a matching lower bound construction exists, finding it seems difficult since it would, in particular, yield a new construction of $\Omega(N^2)$ equiangular lines in $\R^N$, for $N = \binom{n}{k}$.

We do not know anything about equiangular subspaces for the geodesic distance $d_{G}$, as well as other distances which cannot be written in terms of polynomial expressions of $\cos{\theta_1}, \ldots, \cos{\theta_k}$. This is not surprising, since all of the above upper bounds are essentially proven via the polynomial method. It would, therefore, be interesting to find other methods for proving such upper bounds.

\section{Concluding remarks} \label{Concluding remarks}
 
In \cref{Angle distances} we give an upper bound on $N_{\alpha}^d(k,n)$ of the order $n^{2k}$ for any angle distance $d$ and $\alpha > 0$, but are only able to give a corresponding lower bound when $d$ is a proper angle distance. It would therefore be interesting to give lower bound constructions (with common angle $\alpha > 0$) on the order of $n^{2k}$ for angle distances that are not proper, in particular for the minimum angle $\theta_1$. Moreover, if $n \gg k \rightarrow \infty$ then even for proper angle distances $d$, \Cref{c1} still leaves open the correct asymptotic dependence of $N^d(k,n)$ on $k$. 

In section 3, we remark that the polynomial method does not seem to work for distances such as the geodesic distance $d_G$, and so it would be interesting to find new methods which give upper bounds for such cases. It would also be interesting to obtain lower bound constructions for the Fubini-study distance $d_{FS}$, and establish the correct order of magnitude for $N^{d_{FS}}(k,n)$.

Another approach to generalizing equiangular lines is, given a set $H \subseteq \Gr(k, n)$, to require that $H$ is equiangular with respect to $\theta_i$ for all $1 \leq i \leq k$. If we further require that $\theta_1 = \ldots = \theta_k$, we arrive at the notion of \textit{equi-isoclinic} subspaces. Equivalently, a family of subspaces $H \subseteq \Gr(k,n)$ is equi-isoclinic if there exists $\lambda \in [0,1)$ such that $V^{\intercal} U U^{\intercal} V = \lambda I$ for all $U \neq V \in H$. Lemmens and Seidel \cite{LSE73} defined and studied $v(k,n)$, the maximum number of $k$-dimensional equi-isoclinic subspaces in $\R^n$. They gave a construction based on equiangular lines showing that $v(k, kn) \geq v(1,n)$ and generalized Gerzon's bound in \cref{Gerzon}, obtaining $v(k,n) \leq \binom{n+1}{2} - \binom{k+1}{2} + 1$. Note that for $n \gg k \rightarrow \infty$, these bounds together with the fact that $v(1,n) = N(n) \geq \Omega(n^2)$ show that
\[ \Omega \left( \frac{n^2}{k^2} \right) \leq v(k,n) \leq O(n^2).\]
It would be interesting to close this gap and determine the correct asymptotic dependence of $v(k,n)$ on $k$.

\end{document}